\theoremstyle{plain}
\newcommand{\bC}{\mathbb C}
\newcommand{\bN}{\mathbb N}
\newcommand{\bP}{\mathbb P}
\newcommand{\cM}{\mathcal M}
\newcommand{\cO}{\mathcal O}
\newcommand{\ra}{\rightarrow}
\newtheorem{definition}{Definition}
\newtheorem{thm}[definition]{Theorem}
\newtheorem{prop}[definition]{Proposition}
\newtheorem{conj}[definition]{Conjecture}
\newtheorem{claim}[definition]{Claim}
\numberwithin{equation}{section}
\numberwithin{definition}{section}
\begin{document}
\baselineskip=17pt
\title{On   a theorem of Wiegerinck}

\author{R\'obert Sz\H{o}ke\\
Department of Analysis, Institute of Mathematics\\
ELTE E\"otv\"os Lor\'and University\\
P\'azm\'any P\'eter s\'et\'any 1/c, Budapest 1117, Hungary\\
E-mail: rszoke@caesar.elte.hu\\
ORCiD:0000-0002-8723-1068}

\date{}
\maketitle

%\renewcommand{\thefootnote}{}

%\footnote{2010 \emph{Mathematics Subject Classification}: Primary ; Secondary }

%\footnote{\emph{Key words and phrases}:   }

%\renewcommand{\thefootnote}{\arabic{footnote}}
%\setcounter{footnote}{0}
\begin{abstract}
  A theorem of Wiegerinck says that the Bergman space over
  any domain  in $\bC$ is either trivial or infinite dimensional. We generalize
  this theorem in the following form. Let E be a Hermitian,
  holomorphic vector bundle over
  $\bP^1$, the later equipped with a volume form and
  $D$ an arbitrary domain in $\bP^1$. Then the space
  of holomorphic $L^2$ sections of $E$ over $D$ is either equal to
  $H^0(M,E)$ or
 it has infinite dimension.

\end{abstract}

\section{Introduction}\label{S:intr}

Let $D$ be a domain in $\bC^n$ and $\cO L^2(D)$ the Bergman space, that is the
space of those holomorphic functions on $D$ that also belong to $L^2(D)$,
with respect to the Lebesgue measure.
Wiegerinck proved in \cite{W84}, that for $n=1$, and $D$ any domain
in $\bC$, either
$\cO L^2(D)$ is trivial ($=\{0\}$) or  has infinite dimension.
In the same paper for any $k>0$ he constructed a Reinhardt domain in $\bC^2$
with $k$-dimensional Bergman space. These examples were not logarithmically
convex, hence they were not Stein domains.
These results give rise to the following natural conjecture.

\begin{conj}(Wiegerinck)\label{C:1} Let $D$ be a Stein domain in $\bC^n$, then its Bergman space is either
  trivial or infinite dimensional.
\end{conj}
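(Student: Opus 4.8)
The plan is to separate the two genuinely different regimes. If $D$ is bounded, then every polynomial restricts to an element of $\cO L^2(D)$, the monomials are linearly independent, and $\cO L^2(D)$ is automatically infinite dimensional; there is nothing to prove. I would therefore assume throughout that $D$ is unbounded and that $\cO L^2(D) \neq \{0\}$, fix a nonzero $f \in \cO L^2(D)$, and aim to manufacture infinitely many linearly independent $L^2$ holomorphic functions out of $f$.

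The main route is an induction on the dimension $n$ via $L^2$ extension, the base case $n=1$ being the theorem of Wiegerinck quoted in the Introduction, which holds for an arbitrary planar domain. For the inductive step, assume the result in dimension $n-1$, let $D \subset \bC^n$ be Stein with $\cO L^2(D)\neq\{0\}$, and fix $0 \neq f \in \cO L^2(D)$. Foliating $\bC^n$ by the parallel hyperplanes $H_t=\{z_n=t\}$ and applying Fubini to $\int_D|f|^2$, for almost every $t$ the slice has $\int_{D\cap H_t}|f|^2<\infty$; since $f\not\equiv 0$ I may also choose $t$ so that $f$ does not vanish identically on $D\cap H_t$. Passing to a connected component, I obtain a Stein domain $D_0\subset H\cong\bC^{n-1}$ with $f|_{D_0}\in\cO L^2(D_0)$ and $f|_{D_0}\not\equiv 0$. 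By the inductive hypothesis $\dim\cO L^2(D_0)=\infty$; pick linearly independent $h_1,h_2,\ldots\in\cO L^2(D_0)$. Since $D$ is Stein, hence pseudoconvex, I would invoke an Ohsawa--Takegoshi type $L^2$ extension across the hyperplane $H$ to produce holomorphic functions $\tilde h_j$ on $D$ with $\tilde h_j|_{D_0}=h_j$ and $\int_D|\tilde h_j|^2\le C\int_{D_0}|h_j|^2<\infty$, so that $\tilde h_j\in\cO L^2(D)$. If $\sum c_j\tilde h_j=0$, restricting to $D_0$ gives $\sum c_jh_j=0$ and hence all $c_j=0$; the $\tilde h_j$ are therefore linearly independent, $\dim\cO L^2(D)=\infty$, and the induction closes.

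As a parallel line of attack I would study the multiplier space $\mathcal{M}_f := \{g \in \cO(D) : g f \in L^2(D)\}$, a vector space that always contains the constants and for which the injective map $g \mapsto gf$ gives $\dim \mathcal{M}_f = \dim(\mathcal{M}_f\cdot f)\le \dim\cO L^2(D)$. Proving that $\dim\mathcal{M}_f=\infty$ for some nonzero $f$ would immediately establish the conjecture, since $\mathcal{M}_f\cdot f$ is then an infinite dimensional subspace of $\cO L^2(D)$. On a Stein domain one can try to produce new multipliers by solving $\bar\partial$ with H\"ormander $L^2$ estimates against a plurisubharmonic weight built from the exhaustion function: prescribe a nontrivial local value or jet for $gf$ and correct it to a global holomorphic multiplier while keeping $gf$ square integrable.

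The hard part, and indeed the reason the statement is a conjecture rather than a theorem, is the uniformity of the $L^2$ estimate on an \emph{unbounded} domain. The classical Ohsawa--Takegoshi constant $C$ degrades with the diameter of $D$, so for unbounded Stein domains there is no a priori bound guaranteeing that the extensions $\tilde h_j$ remain globally square integrable, and the induction above can fail at precisely this point. Symmetrically, in the multiplier formulation the H\"ormander solution of $\bar\partial$ is controlled only in a weighted $L^2$ norm, and stripping the weight to land in the unweighted Bergman space is exactly what one cannot do in general. Overcoming this would require a weight on the Stein exhaustion that is simultaneously admissible for extension and comparable to the flat metric near the relevant boundary directions and at infinity, or a genuinely new mechanism for propagating square integrability from a slice to the whole unbounded domain. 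It is precisely this non-compact difficulty that the compact base case treated in the present paper---where $D\subset\bP^1$ and the ambient manifold is compact---is able to circumvent.
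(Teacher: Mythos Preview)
The paper does not prove this statement: it is recorded as Conjecture~\ref{C:1} precisely because it is open, and the paper says so explicitly (``as far as we can tell, the conjecture is still open''). There is therefore no proof in the paper to compare against; the paper's main result is the orthogonal statement Theorem~\ref{T:main} about line bundles over $\bP^1$.

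Your proposal is not a proof either, and to your credit you identify the failure yourself. The inductive step hinges on an Ohsawa--Takegoshi extension from a hyperplane slice $D_0=D\cap H$ back to $D$ with an $L^2$ bound $\int_D|\tilde h_j|^2\le C\int_{D_0}|h_j|^2$. For an unbounded pseudoconvex $D$ no such universal constant $C$ is available; the standard theorem requires, e.g., bounded width transverse to $H$ or a suitable plurisubharmonic weight, and stripping that weight to return to the unweighted Bergman space is exactly the missing step. The alternative multiplier route via H\"ormander's $\bar\partial$-estimates has the same defect: the solution is controlled only in a weighted $L^2$ norm, and you give no mechanism to pass to unweighted $L^2$ on an unbounded $D$. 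So the argument, as written, is a heuristic outline of why the conjecture is plausible and where it gets stuck, not a proof; the paper sidesteps this obstacle entirely by working over the compact base $\bP^1$, where all global sections are automatically $L^2$.
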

Despite  of a lot of work and partial results (cf.
\cite{BZ20, D07, GHH17, J12, PW07, PW17}, as far as we can tell, the
conjecture is still open.

The purpose of this paper is to show that there is another very natural
direction in which Wiegerinck's result could be generalized.
More precisely
we mean to consider the following situation.
Let  $M$ be a compact, connected Riemann surface, $E\ra M$ a holomorphic vector
bundle equipped with  a smooth Hermitian metric $h$, and
$dV$ a smooth volume form on $M$. For  a domain  $D\subset M$, denote by
$\cO L^2(\left.E\right|_D)$ 
the set of those holomorphic  sections of
$E$ over $D$ for which
$$
\|s\|^2:=\int\limits_Dh(s,s)dV<\infty.
$$

As is well known $\cO L^2(\left.E\right|_D)$ is a Hilbert space
and  the vector space $H^0(M,E)$
of global holomorphic sections of $E$ is finite dimensional (\cite{GH}).
Clearly
$H^0(M,E)\subset \cO L^2(\left.E\right|_D)$.
\begin{conj}\label{C:b}
 $\cO L^2(\left.E\right|_D)$ is either equal to  $H^0(M,E)$ or
 it has infinite dimension.
 \end{conj}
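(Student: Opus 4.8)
\medskip\noindent For $M=\bP^{1}$ (the case asserted in the abstract) the plan is to reduce the statement, via Grothendieck's splitting theorem, to a statement about weighted Bergman spaces on plane domains, and then to settle the latter by a dichotomy governed by the size of $\bP^{1}\setminus D$. First, since $\bP^{1}$ is compact, any two smooth Hermitian metrics on $E$ (resp.\ any two smooth volume forms on $\bP^{1}$) are uniformly comparable, so $\cO L^{2}(\left.E\right|_{D})$ \emph{as a vector space} is independent of $h$ and $dV$. Writing $E\cong\bigoplus_{i=1}^{r}\cO(a_{i})$ holomorphically, equipping $E$ with a direct-sum metric and $\bP^{1}$ with the Fubini--Study volume form, one gets $\cO L^{2}(\left.E\right|_{D})=\bigoplus_{i}\cO L^{2}(\left.\cO(a_{i})\right|_{D})$ and $H^{0}(\bP^{1},E)=\bigoplus_{i}H^{0}(\bP^{1},\cO(a_{i}))$, so it suffices to treat a single line bundle $L=\cO(a)$. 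Fix $L$ and a domain $D\subsetneq\bP^{1}$ (for $D=\bP^{1}$ the claim is trivial), put $P:=\bP^{1}\setminus D$ (nonempty compact), and — applying a Möbius transformation, which carries $\cO(a)$ to an isomorphic bundle and smooth data to smooth data — assume $\infty\in D$, so $P\subset\bC$. In the standard trivialization of $\cO(a)$ over $\bC$ and with Fubini--Study data, a holomorphic $L^{2}$ section of $L$ over $D$ becomes a holomorphic function $f$ on $\bC\setminus P$ with $\int_{\bC\setminus P}|f|^{2}(1+|z|^{2})^{-(a+2)}\,d\lambda<\infty$ (this single condition also forces the holomorphy of the section at $\infty$), and the subspace coming from $H^{0}(\bP^{1},\cO(a))$ is exactly the polynomials of degree $\le a$.

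Next I would split according to whether $P$ is polar. If $P$ is polar, then near each of its points $f$ is a holomorphic function that is locally $L^{2}$ off a closed set of zero $W^{1,2}$-capacity; such a set is removable (excise $P$ with cut-off functions that are $1$ near $P$ and $\to0$ in $W^{1,2}$-norm, then invoke Weyl's lemma), so $f$ extends to an entire function, still square-integrable against $(1+|z|^{2})^{-(a+2)}$, hence a polynomial of degree $\le a$; thus $\cO L^{2}(\left.L\right|_{D})=H^{0}(\bP^{1},L)$. If $P$ is non-polar, I claim $\dim\cO L^{2}(\left.L\right|_{D})=\infty$. Choose pairwise disjoint compact totally disconnected sets $L_{1},L_{2},\dots\subset P$, each of zero area and non-polar (when the Hausdorff dimension $\dim_{H}P$ is positive such pieces exist inside $P$ by picking, à la Frostman, a subset of finite positive $\mathcal H^{\alpha}$-measure with $\alpha<\min(1,\dim_{H}P)$; when $\dim_{H}P=0$, $P$ itself is totally disconnected of zero area). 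On $L_{i}$ pick a nonzero finite signed measure $\mu_{i}$ of finite logarithmic energy whose moments of order $<\max(0,-a-1)$ all vanish; its Cauchy transform $f_{i}(z)=\int d\mu_{i}(w)/(z-w)$ is holomorphic on $\bC\setminus L_{i}\supset\bC\setminus P$, has finite local $L^{2}$-energy near $L_{i}$ (finite-energy measures have locally $L^{2}$ Cauchy transforms) and decays like $z^{-\max(1,-a)}$ at $\infty$, so $f_{i}$ represents an element of $\cO L^{2}(\left.L\right|_{D})$. These are linearly independent: if $\sum_{i\le N}c_{i}f_{i}$ vanishes on the open set $\bC\setminus P$, then, since $\bigcup_{i\le N}L_{i}$ is totally disconnected (hence does not separate $\bC$) and has zero area, $\sum_{i\le N}c_{i}f_{i}\equiv0$ a.e., so $\sum_{i\le N}c_{i}\mu_{i}=\tfrac1\pi\bar\partial\bigl(\sum_{i\le N}c_{i}f_{i}\bigr)=0$, forcing every $c_{i}=0$ because the $\mu_{i}$ live on disjoint sets. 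So $\cO L^{2}(\left.L\right|_{D})$ equals $H^{0}(\bP^{1},L)$ when $P$ is polar and is infinite-dimensional when $P$ is non-polar, as required.

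The reductions and the passage to a plane domain are routine; the whole substance — and the main obstacle — is the potential theory in the previous paragraph. The delicate point is that \emph{non-polarity}, not positivity of analytic capacity, is the correct threshold: there are non-polar compact sets of zero analytic capacity, over whose complements there are \emph{no} non-constant bounded holomorphic functions, so the naive ``multiply a global section by a bounded function'' argument cannot work; what rescues the non-polar case is that the $L^{2}$-bound only asks the auxiliary measures to have finite logarithmic energy, which is strictly weaker and is available on every non-polar set, whereas on a polar complement even a merely-$L^{2}$ holomorphic singularity is removable. Making these two facts precise — removability of polar sets for locally $L^{2}$ holomorphic functions, and the selection of a usable disjoint family of non-polar pieces together with the $\bar\partial$-argument for linear independence of their Cauchy transforms — is where I expect the real work to lie. (For $a\ge-2$ one can shorten the non-polar case by quoting Wiegerinck's theorem, together with the inclusion $\cO L^{2}(D)\subset\cO L^{2}(\left.\cO(a)\right|_{D})$ and the classical non-triviality of the Bergman space of a hyperbolic plane domain; the very negative values of $a$ still need the construction above.)
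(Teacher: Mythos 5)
Your proposal is sound in outline but it is a genuinely different (and more ambitious) argument than the one in the paper: you prove a potential-theoretic \emph{characterization} --- $\cO L^2(\left.L\right|_D)=H^0(\bP^1,L)$ iff $\bP^1\setminus D$ is polar, infinite-dimensional otherwise --- whereas the paper only proves the dichotomy. The paper's route is Wiegerinck's original trick: assuming one section $s\in\cO L^2(\left.L\right|_D)\setminus H^0(\bP^1,L)$ exists, it manufactures infinitely many independent $L^2$ sections by multiplying $s$ by rational factors of the form $(f(z)-f(z_j))(z-z_0)/(z-z_j)$, with a case split on whether $s$ is globally meromorphic and on the sign of $\deg L$; the only potential theory used is the soft implication ``positive area $\Rightarrow$ positive analytic capacity $\Rightarrow$ $H^\infty(D)$ nontrivial,'' invoked exactly in the one case where the complement is forced to have positive area (so your observation that analytic capacity is the wrong threshold in general, while correct and insightful, does not touch the paper's actual use of it). What your approach buys is the sharper theorem --- which is precisely the content of the Gallagher--Gupta--Vivas paper cited in the paper's ``Note added in proof,'' and is \emph{not} established in the paper under review; what the paper's approach buys is a short, essentially self-contained argument that never has to decide when the space is nontrivial.

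That said, the hard part of your argument is only sketched, and two points need more than a wave. First, to get infinite dimension you need infinitely many pairwise disjoint non-polar compact pieces of $P$, each totally disconnected and of zero area (and, for $a<-1$, each supporting enough independent finite-energy measures to kill the moment conditions); your case $\dim_H P=0$ as stated produces only one piece, and you must still subdivide it, e.g.\ by using non-atomicity and inner regularity of the equilibrium measure of $P$ together with the fact that restrictions of finite-energy measures to compact subsets of positive mass remain of finite energy. Second, the case $\dim_H P>0$ leans on Davies' subset theorem (existence of compact subsets of finite positive $\mathcal H^{\alpha}$-measure) plus Frostman; these are true but are real theorems and should be cited. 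With those points filled in, the $\bar\partial$-argument for linear independence and the removability argument in the polar case are correct, and the reduction to line bundles via Grothendieck coincides with the paper's first step.
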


Supporting this conjecture we have the following observations.
First of all Wiegerinck's result in \cite{W84} implies  that
Conjecture \ref{C:b} is true when
$M=\bP^1$ and $E$ is the canonical bundle of $\bP^1$.
It is  also true when $M$ and $E$ are arbitrary and $K$ is either small,
i.e.,  locally polar
(cf. Proposition~\ref{P:finite}) or
$K$ is big, i.e., has nonempty interior (cf. Proposition~\ref{P:infty}).

Our main result is that for $M=\bP^1$, Conjecture \ref{C:b}
holds. 

\begin{thm}\label{T:main}

  Let   $M=\bP^1$ and $D$ and $E$ be arbitrary. Then
  Conjecture \ref{C:b} is true.
\end{thm}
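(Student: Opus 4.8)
The plan is to reduce to line bundles and then, in the only genuinely new case, to produce an infinite‑dimensional family of square‑integrable sections as Cauchy transforms of measures living on the complement $K:=\bP^1\setminus D$.

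\emph{Reduction to line bundles.} By the Birkhoff--Grothendieck theorem $E\cong\bigoplus_{i=1}^{r}\cO(a_i)$ holomorphically. Since $\bP^1$ is compact, any two smooth Hermitian metrics on $E$, and any two smooth volume forms on $\bP^1$, are uniformly equivalent, so the underlying vector space $\cO L^2(\left.E\right|_D)$ does not depend on the choice of $h$ or of $dV$. Choosing $h$ adapted to the Grothendieck splitting, a holomorphic section of $E$ over $D$ becomes an $r$‑tuple of holomorphic sections of the $\cO(a_i)$ with square norm the sum of those of the components, so $\cO L^2(\left.E\right|_D)=\bigoplus_i\cO L^2(\left.\cO(a_i)\right|_D)$ and $H^0(\bP^1,E)=\bigoplus_i H^0(\bP^1,\cO(a_i))$. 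Hence it suffices to prove the dichotomy for a single line bundle $L=\cO(a)$. Now $K$ is a compact subset of $\bP^1$. If $K$ is polar, Proposition~\ref{P:finite} gives $\cO L^2(\left.L\right|_D)=H^0(\bP^1,L)$; if $K$ has nonempty interior, Proposition~\ref{P:infty} gives $\dim\cO L^2(\left.L\right|_D)=\infty$. So assume $K$ is non‑polar with empty interior, and aim to show $\dim\cO L^2(\left.L\right|_D)=\infty$.

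\emph{The construction.} Fix $p_*\in D$ and work in the affine chart $\bC':=\bP^1\setminus\{p_*\}$, in which $K$ is compact and $\left.L\right|_{\bC'}$ is trivialized by a nowhere‑vanishing holomorphic section $e_0$; thus holomorphic sections of $L$ over $D\cap\bC'$ are holomorphic functions there. The potential‑theoretic input is the elementary fact that a non‑polar compact $K\subset\bC'$ contains a compact $F$ of positive logarithmic capacity and zero planar Lebesgue measure: if $|K|=0$ take $F=K$; otherwise take a Lebesgue density point $z_0$ of $K$ and, by Fubini over the pencil of lines through $z_0$, a line $\ell$ with $\cH^1(K\cap\ell)>0$, and set $F=K\cap\ell$, which lies in a line (so $|F|=0$) and has positive capacity since a subset of a line of positive length supports a probability measure of finite logarithmic energy. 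Fix such an $F$ and a probability measure $\nu$ on $F$ of finite logarithmic energy (hence without atoms). For $g\in L^\infty(\nu)$ put
$$
\widehat{g\nu}(z)=\int_{F}\frac{g(w)\,d\nu(w)}{z-w},
$$
holomorphic on $\bC'\setminus F\supset D\cap\bC'$. I claim that, after imposing at most $\max(-a-1,0)$ linear conditions $\int_{F}w^{k}g\,d\nu=0$ on $g$, the section $(\widehat{g\nu})\,e_0$ belongs to $\cO L^2(\left.L\right|_D)$: it is holomorphic away from $p_*$; near $p_*$ one has $\widehat{g\nu}(z)=O(1/z)$, and the moment conditions (needed only when $a\le-2$) ensure that $(\widehat{g\nu})\,e_0$ extends holomorphically and boundedly across $p_*$, so integrability there is immediate since $h$ and $dV$ are smooth at $p_*$; and square‑integrability near $F$ follows from the classical bound $\int_{B}|\widehat{g\nu}|^{2}\,dm\le C_B\iint\bigl(1+\log^{+}\tfrac1{|w-v|}\bigr)\,d|g\nu|(w)\,d|g\nu|(v)$ on a ball $B\supset F$, whose right side is $\le C(1+\|g\|_\infty^{2})$ because $|g\nu|=|g|\,\nu$, the logarithmic kernel is bounded below on $F\times F$, and $\nu$ has finite energy.

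\emph{Injectivity, conclusion, and the main difficulty.} The linear map $g\mapsto(\widehat{g\nu})\,e_0|_D$ is injective: if $\widehat{g\nu}$ vanishes on $D\cap\bC'$ then, as $K$ has empty interior, $D\cap\bC'$ is dense in $\bC'\setminus F$, so $\widehat{g\nu}$ vanishes on all of $\bC'\setminus F$; since $|F|=0$ it then vanishes a.e.\ on $\bC'$, whence $g\nu=\tfrac1\pi\,\partial_{\bar z}\widehat{g\nu}=0$ and $g=0$ $\nu$‑a.e. Since $\nu$ has no atoms, the $g\in L^\infty(\nu)$ satisfying the finitely many moment conditions form an infinite‑dimensional space, which thus embeds into $\cO L^2(\left.L\right|_D)$; hence $\dim\cO L^2(\left.L\right|_D)=\infty$, and the theorem follows. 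The only non‑formal part of this plan is the potential theory of the construction: extracting the compact set $F\subset K$ that is simultaneously non‑polar and of zero area — this zero‑area feature is exactly what forces the Cauchy transform map to be injective — and verifying that finite logarithmic energy makes the Cauchy transform locally square‑integrable near $F$ against the smooth weight determined by $h$ and $dV$. The reduction to line bundles, the choice of chart, and the analysis at $p_*$ are routine.
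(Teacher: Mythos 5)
Your proposal is correct, but it takes a genuinely different route from the paper. The paper also begins with the Grothendieck splitting to reduce to $L=\cO(k)$, but from there it argues by bootstrapping from a hypothetical section $s\in\cO L^2(\left.L\right|_D)\setminus H^0(\bP^1,L)$: if $s$ is meromorphic on $\bP^1$ then $\lambda(\bP^1\setminus D)>0$, so $H^\infty(D)$ is nontrivial (Garnett, via the Cauchy transform) and one multiplies $s$ by powers of a bounded function (Proposition~\ref{P:b}); if $s$ is not meromorphic one applies Wiegerinck's difference--quotient trick $g_1=(f-f(z_1))\tfrac{z-z_0}{z-z_1}$ to the coefficient function $f$, with an extra multiplicity-raising iteration at $z_0$ when $k<0$. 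You instead prove the sharper potential-theoretic trichotomy --- $K$ polar gives equality with $H^0$, $K$ non-polar gives infinite dimension --- by directly manufacturing sections as Cauchy transforms $\widehat{g\nu}$ of finite-energy measures on a zero-area non-polar compact $F\subset K$ (obtained by Fubini slicing when $|K|>0$), using the standard bound $\int_B|z-w|^{-1}|z-v|^{-1}\,dm(z)\le C_B(1+\log^+|w-v|^{-1})$ for local square-integrability and the vanishing of $|F|$ for injectivity via $\bar\partial\widehat{g\nu}=-\pi g\nu$. This buys an actual characterization (essentially the Gallagher--Gupta--Vivas result cited in the paper's ``Note added in proof'') and avoids the delicate Case~III iteration, at the cost of more potential theory; the paper's argument is more elementary and never needs to decide when a nontrivial $L^2$ section exists. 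Two small points you should tighten: the trivializing section $e_0$ must be chosen meromorphic at $p_*$ (take the canonical section of $\cO(a\cdot p_*)$, nowhere zero on $\bC'$ with divisor supported at $p_*$), since an arbitrary nowhere-vanishing holomorphic trivialization of $\left.L\right|_{\bC'}$ could have an essential singularity at $p_*$ and then the moment-condition analysis at $p_*$ breaks down; and in the injectivity step the conclusion that $\widehat{g\nu}$ vanishes on all of $\bC'\setminus F$ uses that every component of $\bC'\setminus F$ meets the open set $D\cap\bC'$ (which holds precisely because $K$ has empty interior), which is worth saying explicitly.
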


\section{Proofs}

Suppose $M$ is an arbitrary  Riemann surface. Recall that  a subset
$K\subset M$ is called locally polar, if for each point $p\in K$, there exists
a connected neighborhood $U$ of $p$ and a subharmonic function
$\varphi\not\equiv-\infty$ on $U$ such that
$K\cap U\subset\{y\in U | \varphi(y)=-\infty\}$.
Now back to our original setting, let us assume that $M$ is a compact
Riemann surface, $(E,h)\rightarrow M$ a holomorphic Hermitian vector bundle
over $M$, $dV$ a smooth volume form on $M$, $D\subset M$ a domain and
$K=M\setminus D$. When $K$ is small, every element of
$\cO L^2(\left.E\right|_D)$ extends holomorphically to a global section of $E$
yielding:
%When the complement $K=M\setminus D$ is small enough, meaning that it is
%locally negligible 
%for $L^2$ holomorphic functions (for example a finite set of points or
%more generally of logarithmic capacity zero), then
%every element of $\cO L^2(\left.E\right|_D)$
%extends holomorphically to $M$ and so Conjecture \ref{C:b} holds with

\begin{prop}\label{P:finite} Suppose  $K$ is locally polar. Then
  $\cO L^2(\left.E\right|_D)=H^0(M,E)$.
\end{prop}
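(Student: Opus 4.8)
The plan is to show that every $s\in\cO L^2(\left.E\right|_D)$ extends holomorphically across $K=M\setminus D$ to a global holomorphic section of $E$; since, conversely, any element of $H^0(M,E)$ is smooth and $M$ is compact, hence $L^2$ over $D$, this yields the asserted equality $\cO L^2(\left.E\right|_D)=H^0(M,E)$. Because $K$ is closed in the compact surface $M$ it is compact, so it suffices to extend $s$ holomorphically across each point $p\in K$. Finitely many such local extensions, together with $s$ on $D$, cover $M$; on pairwise overlaps they agree with $s$ on a dense open set (as $K$ has empty interior), hence they agree, and they patch to a global section $\tilde s\in H^0(M,E)$ with $\left.\tilde s\right|_D=s$.

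Fix $p\in K$, a disc $\Delta'$ about $p$ lying in a chart that simultaneously trivializes $E$, and a smaller disc $\Delta\Subset\Delta'$ about $p$. In the local holomorphic frame, $s$ becomes a vector $f=(f_1,\dots,f_r)$ of functions holomorphic on $\Delta'\setminus K$. Since $h$ and $dV$ are smooth and nondegenerate, on the compact $\overline{\Delta'}$ the measure $h(s,s)\,dV$ dominates a constant multiple of $|f|^2\,dA$, where $dA$ is Lebesgue measure; hence the hypothesis $\int_D h(s,s)\,dV<\infty$ forces each $f_k\in L^2(\Delta')$. Thus everything reduces to the following removability statement: a function $f\in\cO(\Delta'\setminus K)$ with $f\in L^2(\Delta')$ extends holomorphically across $p$.

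This is where local polarity enters. First, a locally polar set has $2$-dimensional Lebesgue measure zero, being locally contained in $\{\varphi=-\infty\}$ for an $L^1_{\mathrm{loc}}$ subharmonic $\varphi\not\equiv-\infty$; so identifying $L^2(\Delta')$ with $L^2(\Delta'\setminus K)$ above is legitimate. Second, and crucially, a compact polar subset of $\bC$ has zero logarithmic (equivalently, Dirichlet) capacity, so there exist cut-off functions $\chi_j\in C_c^\infty(\Delta')$ with $0\le\chi_j\le1$, $\chi_j\equiv1$ near $K\cap\overline\Delta$, supports shrinking to $K\cap\overline\Delta$, and $\|\nabla\chi_j\|_{L^2(\Delta')}\to0$. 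I would use these to prove $\bar\partial f=0$ on $\Delta$ in the sense of distributions. For $\phi\in C_c^\infty(\Delta)$ write $\phi=\phi(1-\chi_j)+\phi\chi_j$; the first summand is compactly supported in $\Delta\setminus K$, where $f$ is holomorphic, so its pairing against $\bar\partial$ vanishes, leaving $\int f\,\bar\partial\phi\,dA=\int f\chi_j\,\bar\partial\phi\,dA+\int f\phi\,\bar\partial\chi_j\,dA$. The first integral tends to $0$ by dominated convergence ($\chi_j\to0$ a.e.\ since $K\cap\overline\Delta$ is null, with dominant $|f\,\bar\partial\phi|\in L^1$), and the second is at most $\|\phi\|_\infty\,\|f\|_{L^2(\mathrm{supp}\,\chi_j)}\,\|\nabla\chi_j\|_{L^2}\to0$. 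Hence $\bar\partial f=0$ as a distribution, so by hypoellipticity of $\bar\partial$ the function $f$ agrees a.e.\ on $\Delta$ with a holomorphic $\tilde f$; as $f$ is continuous on $\Delta\setminus K$ and $K$ has empty interior, $\tilde f=f$ off $K$, which is the desired extension.

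The only genuinely non-formal ingredient is the capacity fact about polar sets that produces the $\chi_j$ (alternatively one may simply invoke a standard removability theorem for $L^2_{\mathrm{loc}}$ holomorphic functions across polar sets). The reduction to a coordinate disc, the distributional computation above, and the patching argument are all routine, so I expect the write-up to spend most of its effort formulating the potential-theoretic input cleanly and very little on the rest.
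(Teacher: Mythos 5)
Your argument is correct, and its skeleton (reduce to a trivializing coordinate disc, compare $h(s,s)\,dV$ with $|f|^2\,dA$, prove a scalar removability statement, patch) matches the paper's. But the key removability step is handled by a genuinely different route. The paper outsources the analysis entirely: it cites Conway for ``locally polar $\Rightarrow$ polar'' and for a removability theorem for $L^2$ holomorphic functions across \emph{compact} polar sets, and then must work to produce an open $V\ni p$ with $V\cap X$ compact --- this is done via Ransford (polar sets are totally disconnected) plus a theorem of Engelking on locally compact totally disconnected spaces yielding a compact clopen neighborhood. You instead give a self-contained analytic proof: polar sets have vanishing $W^{1,2}$ (logarithmic) capacity, so one has cut-offs $\chi_j$ with $\|\nabla\chi_j\|_{L^2}\to 0$, and the standard splitting $\phi=\phi(1-\chi_j)+\phi\chi_j$ together with Cauchy--Schwarz and dominated convergence shows $\bar\partial f=0$ distributionally, whence Weyl's lemma finishes. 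Your route buys two things: it replaces the citation of a ready-made removability theorem by a short, transparent computation, and it eliminates the point-set-topology detour entirely, since you only ever need compactness of $K\cap\overline\Delta$, which is automatic because $K=M\setminus D$ is closed in the compact surface $M$ (the paper's local formulation, with $X$ merely closed in an open $W\subset\bC$, is what forces the Engelking argument). The price is that you must supply the capacity input (polar $\Rightarrow$ zero condenser capacity $\Rightarrow$ existence of the $\chi_j$), which is standard but is morally the same potential theory hiding inside Conway's theorem. One small point both treatments need and you pass over quickly: $K\cap\overline\Delta$ is a priori only \emph{locally} polar, and you should note that a compact locally polar set is polar (finite subcover plus the fact that a finite union of polar sets is polar) before invoking zero capacity; with that sentence added, your write-up is complete.
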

The special case when $M$ is $\bP^1$ and $E$ is its canonical bundle, this
is a reformulation of a result of Carleson in \cite{Ca67}. Indeed Carleson
proves that in this case
the Bergman space over $D=\bP^1\setminus K$ is trivial. On the
other hand the canonical bundle of $\bP^1$ is negative so its only
holomorphic section is the zero section, i.e., $H^0(M,E)$ is  trivial
as well.
The general case is also essentially
known, but for the readers' sake we include a proof here.

\begin{proof}
  \footnote{We thank László Fehér for discussions on totally disconnected
  spaces.}
  We need to show that each element of
  $\cO L^2(\left.E\right|_D)$ extends holomorphically
  to yield a global holomorphic section of $E$. 
  It suffices to prove  local extension.
  But locally the bundle $E$ is trivial and the volume form of $M$
  is equivalent to the Euclidean volume form (in a coordinate neighborhood).
  So the problem reduces to an extension problem on holomorphic functions.

  Let $W\subset \mathbb C$ be open, $X\subset W$  a 
   closed, locally polar set in $W$ and $f$ a holomorphic $L^2$ function on
   $W\setminus X$. We need to show that for each point $p\in X$, $f$
   extends holomorphically to a neighborhood of $p$.
   According to \cite[Ch. 21, Proposition 5.5]{Co95}, $X$ is  a
 polar  subset of $\mathbb C$.
     In light of \cite[Ch. 21,Theorem 9.5 (c)]{Co95}, it suffices
    to find an open set $V$, that contains $p$, so that $K=V\cap X$ is compact.
    To find such a set,
    one can argue as follows.
    Since $X$ is  a
 polar subset of $\mathbb C$, by \cite[Corollary 3.8.5]{R95}, $X$
 is  a totally disconnected space. Now $X$ is closed in $W$, hence it is also
 locally compact. Let $F\subset X$ be a compact neighborhood of $p$ and
 $U\subset X$ open with $p\in U\subset F$.
  A general result on totally disconnected, locally compact   spaces
  (\cite[Theorem 6.2.9]{E89}) tells us that there exists  a set
  $K\subset U$ that is both open and closed in $X$ and $p\in K$.
  Let  $V\subset\mathbb C$ be open 
  with $V\cap X=K$ (can assume $V\subset W$). 
  Since $K$ is  a closed subset of a compact space (namely $F$),
  it is also compact. 
 \end{proof}   

The other extreme case is when $K$ is big in the sense, that its interior is
nonempty.
\begin{prop}\label{P:infty} Suppose   the interior of $K$ is nonempty.
  Then $\cO L^2(\left.E\right|_D)$ is  infinite dimensional.
\end{prop}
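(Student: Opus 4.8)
The plan is to use that $\mathrm{int}(K)\neq\emptyset$ to hide a point deep inside $K$ and twist $E$ by the line bundle of a large multiple of that point. The resulting spaces of global meromorphic sections of $E$ have dimension tending to infinity by Riemann--Roch; all of them are holomorphic on $D$; they are automatically square integrable over $D$ because $D$ stays away from the point; and restriction to $D$ is injective. Hence $\cO L^2(\left.E\right|_D)$ contains vector subspaces of arbitrarily large finite dimension, so it is infinite dimensional.

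In detail, fix a point $a\in\mathrm{int}(K)$ and an open neighbourhood $N$ of $a$ with $N\subset K$. Let $L=\cO_M([a])$ be the holomorphic line bundle of the divisor $[a]$; it has degree $1$. For $n\ge 1$ set $V_n=H^0(M,E\otimes L^{\otimes n})$, which we view as the space of meromorphic sections of $E$ over $M$ that are holomorphic on $M\setminus\{a\}$ and have a pole of order at most $n$ at $a$. Multiplication by the canonical section of $\cO_M([a])$ (the one vanishing to first order at $a$) gives inclusions $V_n\hookrightarrow V_{n+1}$, so $V:=\bigcup_{n\ge 1}V_n$ is an increasing union of finite dimensional vector spaces. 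Each $s\in V$ is a holomorphic section of $E$ on $M\setminus\{a\}\supset D$, hence $s|_D$ is a holomorphic section of $E$ over $D$. Moreover $D\subset M\setminus N$, which is compact, and $h(s,s)$ is continuous on $M\setminus\{a\}\supset M\setminus N$, so $h(s,s)$ is bounded on $D$ and $\int_D h(s,s)\,dV\le\big(\sup_{M\setminus N}h(s,s)\big)\,\mathrm{Vol}(M)<\infty$; thus $s|_D\in\cO L^2(\left.E\right|_D)$. The restriction map $V\ra\cO L^2(\left.E\right|_D)$, $s\mapsto s|_D$, is $\bC$-linear, and it is injective: if $s|_D=0$, then the meromorphic section $s$ of $E$ over the connected surface $M$ vanishes on the nonempty open set $D$, hence vanishes identically by the identity principle.

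It remains to show that $\dim V=\infty$, i.e.\ that $\dim V_n\ra\infty$. This is where Riemann--Roch enters. For a holomorphic vector bundle $F$ of rank $r$ over a compact Riemann surface of genus $g$ one has $\chi(M,F)=\deg F+r(1-g)$; since $\deg(E\otimes L^{\otimes n})=\deg E+nr$, this gives $\chi(M,E\otimes L^{\otimes n})=\deg E+nr+r(1-g)$. Because $L$ has positive degree, Serre vanishing yields $H^1(M,E\otimes L^{\otimes n})=0$ for all large $n$, and therefore $\dim V_n=\chi(M,E\otimes L^{\otimes n})=\deg E+nr+r(1-g)\ra\infty$. (Alternatively, one may restrict attention to a line subbundle of $E$ and invoke only Riemann--Roch for line bundles.) Hence $\cO L^2(\left.E\right|_D)$ contains the infinite dimensional space $V$ and is infinite dimensional. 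The only genuine input here is Riemann--Roch; everything else is elementary, and the essential idea is simply to twist by a divisor supported at a point lying in the interior of $K$, which keeps all the new sections, and their poles, away from $D$.
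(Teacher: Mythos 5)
Your proof is correct, but it follows a genuinely different route from the paper's. The paper also picks a point $p$ in the interior of $K$ and exploits that $\overline D$ is then a compact subset of $G=M\setminus\{p\}$, but instead of twisting by $\cO_M([p])$ it invokes the theorem that every holomorphic vector bundle over an open Riemann surface is holomorphically trivial (Forster, Theorem 30.4), so that $H^0(G,\left.E\right|_G)\cong Hol(G,\bC^l)$, which is infinite dimensional because $G$ is Stein; restriction to $D$ then lands in $\cO L^2(\left.E\right|_D)$ by the same compactness/boundedness argument you use. Your version replaces the Behnke--Stein/Grauert-type input (triviality over open Riemann surfaces, infinite dimensionality of $\cO(G)$) with Riemann--Roch: the inequality $h^0(M,E\otimes L^{\otimes n})\ge\chi(M,E\otimes L^{\otimes n})=\deg E+nr+r(1-g)\to\infty$ already suffices, so you do not even need the Serre vanishing step you mention. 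A small stylistic advantage of your argument is that it stays entirely within the world of global (meromorphic) sections on the compact surface and produces explicit finite-dimensional subspaces $V_n$ of growing dimension, which is closer in spirit to the constructions in Cases II and III of the main theorem; the paper's argument is shorter once the triviality theorem is granted. Both proofs hinge on the same two elementary observations, which you state cleanly: the poles are confined to a point at positive distance from $\overline D$, so square integrability over $D$ is automatic, and restriction is injective by the identity principle on the connected surface $M$.
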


\begin{proof}

Let $p$ be an interior point of $K$ and $l$ be the rank of $E$.
  Let
 $G=M\setminus\{p\} $.  
 Since $G$ is an open Riemann surface, $\left.E\right|_G$ is holomorphically
 trivial (\cite[Theorem 30.4]{F81}),
 hence $H^0(G,\left.E\right|_G)=Hol(G,\mathbb C^l)$, the
 space of holomorphic maps from $G$ to $\mathbb C^l$.
  $G$ being Stein, the latter space  is
 infinite dimensional.
 But $\overline D$ is a  compact subset in $G$, hence
 all these holomorphic maps (sections), via restriction  to $D$, belong to
 $L^2(\left.E\right|_D)$.
\end{proof}

 Denote by $H^\infty(D)$  the set of bounded holomorphic functions on $D$.
We shall say that $H^\infty(D)$ is nontrivial, if it contains an element not
identically constant.

\begin{prop}\label{P:b} Suppose $H^\infty(D)$ is nontrivial. Then 
  Conjecture \ref{C:b}
    holds.
\end{prop}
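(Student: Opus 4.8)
The plan is to use the standard device of multiplying by powers of a bounded holomorphic function. Assume $\cO L^2(\left.E\right|_D)\neq H^0(M,E)$; since the inclusion $H^0(M,E)\subset\cO L^2(\left.E\right|_D)$ always holds, there is then a holomorphic section $s\in\cO L^2(\left.E\right|_D)$ with $s\not\equiv 0$, and my goal is to exhibit infinitely many linearly independent elements of $\cO L^2(\left.E\right|_D)$, which proves the conjecture in this case.

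First I would fix, using the hypothesis, a non-constant $f\in H^\infty(D)$ and set $\|f\|_\infty=\sup_D|f|<\infty$. For each integer $k\geq 0$ the section $f^ks$ is holomorphic on $D$, and since $h$ is a Hermitian metric,
$$
\|f^ks\|^2=\int_D|f|^{2k}h(s,s)\,dV\leq\|f\|_\infty^{2k}\int_Dh(s,s)\,dV=\|f\|_\infty^{2k}\|s\|^2<\infty ,
$$
so $f^ks\in\cO L^2(\left.E\right|_D)$ for every $k$.

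Next I would prove that $\{f^ks:k\geq 0\}$ is linearly independent. Suppose instead that $\sum_{k=0}^nc_kf^ks=0$ on $D$ with the $c_k$ not all zero, and let $P(z)=\sum_{k=0}^nc_kz^k$, a nonzero polynomial. Then $P(f)\cdot s\equiv 0$ on $D$. Since $D$ is connected and $s\not\equiv 0$, in any local holomorphic trivialization of $E$ at least one component of $s$ is a nonzero holomorphic function, so the zero set $Z$ of $s$ is a discrete (possibly empty) subset of $D$; on the dense open set $D\setminus Z$ we get $P(f)=0$, hence $P(f)\equiv 0$ on $D$ by the identity theorem. Thus $f$ takes its values in the finite set of roots of $P$, and since $f$ is holomorphic on the connected set $D$ it must be constant, contradicting the choice of $f$. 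Hence $\{f^ks:k\geq 0\}$ spans an infinite-dimensional subspace of $\cO L^2(\left.E\right|_D)$.

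There is no serious obstacle here; the only point that requires a moment's care is that the zero locus of a holomorphic section of a bundle of rank $>1$ need not be discrete a priori, but this is dealt with by passing to a local trivialization (or by working on the open set where $s$ is nonvanishing), as indicated above. I would also remark that this argument uses nothing particular about $\bP^1$: it yields Conjecture \ref{C:b} for an arbitrary compact Riemann surface $M$ whenever $H^\infty(D)$ is nontrivial.
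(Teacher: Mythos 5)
Your proposal is correct and follows essentially the same route as the paper: both multiply a section $s\in\cO L^2(\left.E\right|_D)\setminus H^0(M,E)$ by powers of a bounded nonconstant holomorphic function and check that the resulting sections are $L^2$ and linearly independent. The only cosmetic difference is that the paper normalizes $f=g-g(w)$ so that linear independence follows from increasing vanishing orders at $w$, whereas you argue via the polynomial identity $P(f)\equiv 0$; both are valid.
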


\begin{proof}

   Suppose $\exists s\in \cO L^2(\left.E\right|_D)$, with
  $s\not\in H^0(M,E)$. Let $g\in H^\infty(D)$ be a nonconstant element 
  and $w\in D$ be arbitrary. Then  $f:=g-g(w)\in H^\infty(D)$ vanishes at $w$
  with finite, nonzero multiplicity.
  This yields
$$
f^ms\in \cO L^2(\left.L\right|_D), \quad m\in\bN,
$$
implying the infinite dimensionality of  $\cO L^2(\left.L\right|_D)$.

  \end{proof}

For an arbitrary holomorphic line bundle $L\ra M$, denote by $\cM(L)$ the
set of meromorphic sections. Whereas for an open subset $D\subset M$
the set of meromorphic functions on $D$ is denoted by
$\cM(D)$.

\begin{proof}[Proof of Theorem~\ref{T:main}]

 Let $E\ra \bP^1$ be a holomorphic vector
  bundle of rank $r$.
  According to Grothendieck's theorem (\cite{G57}) $E$ holomorphically
  splits  as a direct sum of holomorphic line bundles:
  $$
E=L_1\oplus\dots \oplus L_r.
$$
Therefore it is enough to prove  Conjecture \ref{C:b} when $E=L$ is a
holomorphic
line bundle.
 The proof then follows similar reasonings as \cite{W84}.
  
 As is well known (\cite[Prop. 4.6.2]{V11}),
 there exists an integer $k$, so that
  $L=\cO(k)$, where $\cO(k)$ denotes the line bundle associated to the divisor
  $kp$, $p$ being an (arbitrary) point in $\bP^1$.
  Suppose $\exists s\in \cO L^2(\left.L\right|_D)$, with
  $s\not\in H^0(\bP^1,L)$.

  {\it  Case I.}   $s\in\cM(L)$.

Since isolated $L^2$ singularities are removable and
  $s\in\cM(L)\setminus H^0(\bP^1,L)$, we get

\begin{equation}\label{E:inf}
\int\limits_{\bP^1}h(s,s)dV=\infty.
\end{equation}
Now $\left.s\right|_D$ is in $L^2$, hence 

\begin{equation}\label{E:pos}
\lambda(\bP^1\setminus D)>0,
\end{equation}
where $\lambda$ denotes  Lebesgue measure on $\bP^1$. (Having Lebesgue measure
zero or positive is well defined on any smooth manifold and does not depend
on any choice of a Riemannian metric or a volume form.)
 From \eqref{E:pos} and a well known result
about the Cauchy transform (cf. \cite[p.2]{G72}) we get that
$H^\infty(D)$ is nontrivial, so in light of Proposition \ref{P:b}  we are done.

{\it Case II.} $s\not\in\cM(L)$ and $k\ge 0$.

 Pick an arbitrary point
$\infty\not=z_0\in D$.
 Since $L=\cO(k)$, $L$ admits  a holomorphic section $s_L:\bP^1\ra L$  with
the only possible zero at $z_0$, the multiplicity at $z_0$ being $k$. Then
$$
s=fs_L,
$$
with  some $f\in\cM(D)$. Here $f$ must be holomorphic in $D$ except perhaps at
$z_0$, where $f$ may have  a pole  at most of order $k$.
Since $s$ is not in  $\cM(L)$, $f$ cannot be in $\cM(\bP^1)$.
Let $z_1\in D\setminus\{z_0,\infty\}$ be another arbitrary point. Define the function $g_1$ by
\begin{equation}\label{E:new}
g_1(z)=(f(z)-f(z_1))\frac{(z-z_0)}{(z-z_1)}.
\end{equation}

\begin{claim}\label{CL:1} $s_1:=g_1s_L\in \cO L^2(\left.L\right|_D)$
\end{claim}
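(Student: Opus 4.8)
The claim asserts two things: that $s_1=g_1s_L$ is a holomorphic section of $L$ over all of $D$, and that it is square-integrable there.

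\emph{Holomorphicity of $s_1$ on $D$.} Away from $z_0$ and $z_1$ the function $f$ is holomorphic, hence so is $g_1$, and therefore $g_1s_L$ is holomorphic. At $z_1\in D\setminus\{z_0\}$ the function $f$ is holomorphic, so $f(z_1)$ is a finite number and $(f(z)-f(z_1))/(z-z_1)$ has a removable singularity; thus $g_1$, and hence $s_1$, is holomorphic at $z_1$. At $z_0$ the local factor $f-f(z_1)$ has at worst a pole of order $k$, while $(z-z_0)/(z-z_1)$ has a simple zero there and $s_L$ has a zero of order $k$; so the product $s_1$ has a zero of order at least $1$ at $z_0$, and in particular is holomorphic there. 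Finally, if $\infty\in D$ then $f$ is holomorphic at $\infty$ (near $\infty$ write $f=s/s_L$, with $s_L$ nonvanishing) and $(z-z_0)/(z-z_1)\to1$, so $s_1$ is holomorphic at $\infty$ as well. Thus $s_1$ is a holomorphic section of $L$ over $D$.

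\emph{Square-integrability of $s_1$.} Choose $\rho>0$ so small that the closed ball $\overline B:=\overline{B(z_1,\rho)}$, taken in the affine chart $\bC=\bP^1\setminus\{\infty\}$, is contained in $D$; this is possible because $z_1\in D$ and $D$ is open. On $\overline B$ the section $s_1$ is holomorphic, hence bounded, and $\overline B$ has finite volume, so $\int_{\overline B}h(s_1,s_1)\,dV<\infty$. On $D\setminus\overline B$ I would instead use the algebraic identity
$$
s_1=g_1s_L=R\,(f-f(z_1))\,s_L=R\,s-f(z_1)\,R\,s_L,\qquad R(z):=\frac{z-z_0}{z-z_1},
$$
where $s=fs_L\in\cO L^2(\left.L\right|_D)$. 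The rational function $R$ is holomorphic on $\bP^1\setminus\{z_1\}$ (with $R(\infty)=1$), hence continuous, and therefore bounded by some constant $C$, on the compact set $\bP^1\setminus B(z_1,\rho)$, which contains $D\setminus\overline B$. Consequently
$$
\int\limits_{D\setminus\overline B}h(s_1,s_1)\,dV\le 2C^2\int\limits_Dh(s,s)\,dV+2C^2|f(z_1)|^2\int\limits_{\bP^1}h(s_L,s_L)\,dV<\infty,
$$
the first integral being finite since $s\in\cO L^2(\left.L\right|_D)$ and the second since $s_L$ is a global holomorphic section of $L$ and $\bP^1$ is compact of finite volume. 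Summing the two contributions gives $\|s_1\|^2<\infty$, i.e. $s_1\in\cO L^2(\left.L\right|_D)$.

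\emph{The main difficulty.} The only delicate point is the pole of $R$ at $z_1$: the naive pointwise bound, estimating $h(s_1,s_1)^{1/2}$ by $|R|\,h(s,s)^{1/2}+|f(z_1)|\,|R|\,h(s_L,s_L)^{1/2}$, is worthless near $z_1$, since $|R|^2$ blows up like $|z-z_1|^{-2}$ there, which is not locally integrable. The point of the definition~\eqref{E:new} is precisely that $f-f(z_1)$ vanishes at $z_1$, so $s_1$ itself is holomorphic across $z_1$; this is why one must treat the small ball $\overline B$ around $z_1$ by holomorphicity of $s_1$ rather than by the product estimate, and apply the product estimate only on $D\setminus\overline B$, where $R$ is bounded. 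Everything else is routine manipulation with the finite quantities $|f(z_1)|$, $\int_{\bP^1}h(s_L,s_L)\,dV$, and $\|s\|^2$.
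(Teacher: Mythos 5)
Your proof is correct and follows essentially the same strategy as the paper: establish holomorphicity of $s_1$ across $z_0$ and $z_1$, then split $D$ into a compact piece (where holomorphy gives boundedness) and its complement (where the rational factor $\frac{z-z_0}{z-z_1}$ is bounded and $(f-f(z_1))s_L$ is $L^2$). The only cosmetic difference is that the paper removes a neighborhood of both $z_0$ and $z_1$ while you remove only a ball around $z_1$; both work, since the rational factor is bounded away from $z_1$ alone.
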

\begin{proof}

$g_1$ is holomorphic in $D\setminus \{z_0\}$ and  in $z_0$ it has
either a removable singularity or a pole of order at worst $k-1$. Hence
$g_1s_L$ is a holomorphic section of $L$ over $D$. Let $U\subset D$ be an open
subset with $z_0, z_1\in U$ and $\bar U\subset D$ compact. Holomorphicity of $g_1s_L$ implies that $g_1s_L$
is an $L^2$ section of $L$ over $U$.

Since $(f-f(z_1))s_L$ is an $L^2$ section over $D$ and
$$
\frac{z-z_0}{z-z_1}
$$
is bounded on $\bP^1\setminus U$, we get that $g_1s_L$ is in $L^2$ over
$D\setminus U$ as well. Since $g_1s_L$ is  holomorphic,
it is also in $L^2$ over the compact set $\bar U$ which proves
Claim~\ref{CL:1}.
\end{proof}

Back to the proof of our theorem,
let now $N$ be an arbitrary  positive integer and $z_1, z_2,\dots, z_N\in D\setminus\{z_0,\infty\}$ be arbitrary different
points. Denote the corresponding functions defined by formula \eqref{E:new}
by $g_j$ and let
 $s_j:=g_js_L,$ where $j=1,\dots, N$. Due
to  Claim \ref{CL:1} they all belong to $\cO L^2(\left.L\right|_D)$.

\begin{claim}\label{CL:2} $s_1,\dots, s_N$ are linearly independent.
\end{claim}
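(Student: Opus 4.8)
The plan is to show that the functions $g_1,\dots,g_N$ from \eqref{E:new} are linearly independent as meromorphic functions on $D$, which immediately gives linear independence of $s_1,\dots,s_N = g_j s_L$ since $s_L$ is a fixed nonzero section. So suppose $\sum_{j=1}^N c_j g_j \equiv 0$ on $D$ with scalars $c_j$; I want to conclude all $c_j=0$. The idea is to exploit the normalization built into \eqref{E:new}: each $g_j$ vanishes at $z_0$ (or has reduced pole order there) and, crucially, $g_j(z_k)$ behaves differently for $k=j$ versus $k\ne j$.

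\medskip

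First I would observe the key evaluation property. For $z$ near a chosen point $z_k$ (with $k$ among $1,\dots,N$), the factor $\frac{z-z_0}{z-z_1}$ in $g_j$ — wait, one must be careful: formula \eqref{E:new} uses $z_1$ in the denominator for \emph{all} $g_j$? Re-reading, $g_j$ is obtained from \eqref{E:new} by replacing $z_1$ with $z_j$ throughout, so $g_j(z)=(f(z)-f(z_j))\frac{z-z_0}{z-z_j}$. Then $g_j$ has a removable singularity at $z_j$ with value $g_j(z_j)=f'(z_j)(z_j-z_0)\cdot$(something) — more precisely $\lim_{z\to z_j} g_j(z) = f'(z_j)(z_j - z_0)$, while for $k\ne j$ we simply get $g_j(z_k) = (f(z_k)-f(z_j))\frac{z_k-z_0}{z_k-z_j}$, a genuine evaluation of $f$. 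The difficulty is that this still mixes all the $z_k$'s, so a naive evaluation argument yields a linear system whose matrix is not obviously invertible.

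\medskip

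The cleaner route, and the one I would actually carry out, is an induction on $N$ using pole orders or vanishing orders at the points $z_j$, exactly as in Wiegerinck's argument. The point is that $f\notin\cM(\bP^1)$, so $f$ is not rational; hence for suitable generic choices of the points $z_1,\dots,z_N$ one can arrange that no nontrivial rational relation holds. Concretely, suppose $\sum c_j g_j\equiv 0$. Multiply through by $\prod_{j}(z-z_j)$ to clear denominators: one obtains
\begin{equation}\label{E:rel}
(z-z_0)\sum_{j=1}^N c_j\,(f(z)-f(z_j))\prod_{i\ne j}(z-z_i)\equiv 0 \qu(\text{on } D).
\end{equation}
Since $z-z_0\not\equiv 0$, the remaining factor vanishes identically. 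Writing this as $f(z)\cdot\big(\sum_j c_j\prod_{i\ne j}(z-z_i)\big) = \sum_j c_j f(z_j)\prod_{i\ne j}(z-z_i)$, the right-hand side is a polynomial of degree $\le N-1$, while the parenthesized polynomial multiplying $f$ on the left is also of degree $\le N-1$. If that polynomial were not identically zero, then $f$ would equal a ratio of two polynomials, i.e. $f\in\cM(\bP^1)$ — contradiction. Hence $\sum_j c_j\prod_{i\ne j}(z-z_i)\equiv 0$; but these $N$ polynomials (Lagrange-type basis polynomials for the distinct nodes $z_1,\dots,z_N$) are linearly independent, forcing all $c_j=0$. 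Then the right-hand side of \eqref{E:rel} also vanishes, consistent with $c_j=0$, and we are done.

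\medskip

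The main obstacle is bookkeeping at $z_0$ and $\infty$: one must make sure that clearing denominators and invoking ``$f\notin\cM(\bP^1)$'' is legitimate even though $f$ itself may have a pole of order $\le k$ at $z_0$ and we work on $D$ rather than all of $\bP^1$. This is handled by noting that $(z-z_0)^k f(z)$ extends holomorphically across $z_0$, that $D$ is connected so identity-theorem arguments apply, and that a function meromorphic on $D$ which agrees with a rational function on $D$ is the restriction of that rational function — hence would lie in $\cM(\bP^1)$, the contradiction we want. With that care, the linear-independence claim follows, completing Case II and thus (together with Case I) the proof of Theorem~\ref{T:main}.
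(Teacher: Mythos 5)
Your argument is correct and is exactly the paper's, which compresses it to a single line: a nontrivial dependence among the $s_j$ would force $f\in\cM(\bP^1)$. Clearing denominators to exhibit $f$ as a ratio of polynomials unless the coefficient polynomial $\sum_j c_j\prod_{i\ne j}(z-z_i)$ vanishes, then using linear independence of these Lagrange-type polynomials for distinct nodes, is the intended justification, and your closing remark that a meromorphic function on the connected set $D$ agreeing with a rational function is the restriction of an element of $\cM(\bP^1)$ correctly supplies the contradiction.
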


\begin{proof}
Indeed if they were linearly dependent, that would yield that $f$ is a
meromorphic function on $\bP^1$, a contradiction.
\end{proof}

Claim \ref{CL:2} shows that
$\cO L^2(\left.L\right|_D)$ has infinite dimension when $k\ge 0$,
finishing the proof of Case II.

{\it  Case III.}  $L=\cO(k)$, $k<0$,   $s\in \cO L^2(D,L)$,  and
$s\not\in\cM(L)$    

Similarly to  Case II, pick a point $\infty\not=z_0\in D$.
$L=\cO(k)$ implies that $L$ admits  a meromorphic section $m_L$
with $z_0$ being the only
singularity,  a pole of order $k$ and that $m_L$ does not vanish on
$\bP^1\setminus \{z_0\}$.

Because $s\in \cO L^2(\left.L\right|_D)$, we can write 
$$
s=fm_L,
$$
where $f$  now is holomorphic in $D$ but is not in $\cM(\bP^1)$.
Since $s$ is holomorphic,
$f$ must vanish at $z_0$. Denote by $l$
its multiplicity (necessarily $l\ge k$).
 Then $s$ vanishes at $z_0$ with multiplicity $l-k$.

%Take a point $z_1$ in $D$ and with
%it define the function $g_1$ as
%before. This time $g_1$ will be holomorphic in $D$, but $g_1m_L$ may have
  %a pole at
%$z_0$. Its order is at most $k-1$.

Let $N>l$ be an arbitrary positive integer and 
 $z_1, z_2,\dots, z_N\in D\setminus\{z_0,\infty\}$  arbitrary but different
 points. Define the corresponding functions  $g_j$ by  formula \ref{E:new}.
 Then all $g_j$ are holomorphic in $D$, but $g_jm_L$ has a pole at $z_0$ of
 order  $k-1$ if $f(z_j)\not=0$.
 Let $\lambda_j\in\bC$, $j=1,\dots,N$, $\lambda=(\lambda_1,\dots,\lambda_N)$
 and define $g_\lambda$ by
 \begin{equation}\label{E:lambda}
   g_\lambda(z):=\sum\limits^N_{j=1}\lambda_jg_j(z)=
   f(z)\left(\sum\limits^N_{j=1}\frac{\lambda_j}{z-z_j}\right)(z-z_0)-
  \left(\sum\limits^N_{j=1}\frac{\lambda_jf(z_j)}{z-z_j}\right)(z-z_0).
 \end{equation}

 Since $f\not\in\cM(\bP^1)$, $g_\lambda\not\in\cM(\bP^1)$ except
  if all the $\lambda_j^,s$ are zero. 
 In \eqref{E:lambda} the first term is holomorphic near $z_0$ and vanishes at $z_0$ with
 multiplicity at least $l+1$. The function
 \begin{equation}\label{E:rec}
   f_\lambda:=\sum\limits^N_{j=1}\frac{\lambda_jf(z_j)}{z-z_j}
 \end{equation} is also holomorphic near $z_0$ and the coefficients of its
 Taylor series around
 $z_0$ depend linearly on the $\lambda_j^,s$. Therefore
 to impose the condition on
 $f_\lambda$
 to vanish at $z_0$ with multiplicity $l$ means solving
 a homogeneous system of linear equations in $N$ unkowns and $l$ equations.
 Since $N>l$, this system will have a nontrivial solution. 
 Let $\lambda$ be such a solution. Then the corresponding $g_\lambda$   will
 vanish at $z_0$ with multiplicity at least $l+1$. Hence
the holomorphic section  $s_\lambda:=g_\lambda m_L$
 vanishes at $z_0$ with higher multiplicity than $s$ does.
 A similar argument as in the proof of Claim \ref{CL:1} shows that
 $s_\lambda\in\cO L^2(\left.L\right|_D)$.
Moreover
  $g_\lambda\not\in\cM(\bP^1)$ implies that $s_\lambda\not\in\cM(L)$. Hence we can
   now repeat the whole process to $s_\lambda$. That yields indeed that
  $\cO L^2(\left.L\right|_D)$ has infinite dimension.
\end{proof}  

{\bf Acknowledgement:}
This research was   supported by NKFI grant K112703, K128862  and the R\'enyi Institute of
Mathematics. We thank  the referee for a very careful reading of the
manuscript and suggestions to improve the presentation of the paper.

{\bf Note added in proof.} During the refereeing process, a closely related
paper appeared in the arxiv: A-K. Gallagher, P. Gupta and L. Vivas:
On the dimension of Begman spaces on $\mathbb P^1$, arXiv:2110.02324v2. Here
the authors give potential theoretic characterizations of the dimension of
the Bergman space. Furthermore, by generalizing Wiegerinck's example,
they show that in higher dimensions the dichotomy
we considered in our paper, does
not hold in general for certain line bundles over $\mathbb P^2$.
\end{document}